\newtheorem{thm}{Theorem}[section]
\newtheorem{cor}{Corollary}[section]
\newtheorem{rem}{Remark}[section]
\newcommand{\bR}{\mathbb{R}}
\newcommand{\bE}{\mathbb{E}}
\newcommand{\bP}{\mathbb{P}}
\newcommand{\al}{\alpha}
\newcommand{\la}{\lambda}
\newcommand{\ep}{\varepsilon}
\newcommand{\goto}{\rightarrow}
\def\<{\left<}\def\>{\right>}
\def\({\left(}\def\){\right)}
\newcommand{\dd}{\mathrm{d}}
\begin{document}


\centerline{\Large On the last exit times for spectrally negative L\'evy processes }

\vskip 1 cm

\centerline{\large Yingqiu Li$^{a}$,  Chuancun Yin$^{b}$ and Xiaowen Zhou$^{c}$ }

\bigskip

\centerline{\small $^{a}$School of Mathematics and Statistics, Changsha University of Science and Technology}

\smallskip

\centerline{\small $^{b}$School of Statistics,   Qufu Normal University}

 \smallskip

\centerline{\small $^{c}$Department of Mathematics and Statistics, Concordia University}

\bigskip

\vskip 1 cm

\centerline{\large Abstract}

\vskip 0.5 cm

Using a new approach, for spectrally negative L\'evy processes we find joint Laplace transforms involving the last exit time (from a semi-infinite interval), the value of the process at the last exit time and the associated occupation time, which generalizes some previous results.

\vskip 0.5 cm

\noindent Keywords: spectrally negative L\'evy process, last exit time, Gerber-Shiu function, occupation time.

\noindent MSC classification: Primary 60G51; Secondary 60K30

\section{Introduction}

The first passage times, also known as  the first exit times, have been studied extensively for stochastic processes.
 They find many applications in risk theory since the first exit time corresponds to the ruin time of a risk process. The joint distribution of quantities related to the ruin time, often referred as Gerber-Shiu functions, are very interested in the study of risk models. For spectrally negative L\'evy processes those first passage time related quantities can often be expressed in terms of the associated scale functions.

The last passage time is also interested in risk theory. In a risk model, it can be regarded as the final recovery time after when there will be no more ruin; see \cite{Ger09} for early work in this respect.
A  Laplace transform for the last exit time from the positive part of the real line  is first obtained in \cite{CY05} for spectrally negative L\'evy process using an identity between density of the first exit time and marginal density of the L\'evy process.
Applying the strong Markov property and the memoryless property for exponential random variables, in \cite{Bau09} Laplace transforms are also found for the last exit times (before an independent exponential time) from the positive part and the negative part of the real line, respectively. In both \cite{CY05} and \cite{Bau09} the Laplace transforms are expressed in terms of the Laplace exponent and scale functions for the associated spectrally negative L\'evy process.

In this paper we further consider the joint distribution concerning the last time when a spectrally negative L\'evy process leaves a semi-infinite interval. More precisely, for a spectrally negative L\'evy process running up to an exponential time, we find the joint Laplace transform of the occupation time until the last exit time, the last exit time and value of the process at the last exit time.    They can be treated as Gerber-Shiu function like quantities for the last exit time of a spectrally negative L\'evy process and  generalize the previous results in \cite{CY05} and \cite{Bau09}. Right before finalizing this paper we notice the recent work of \cite{LC16} in which the joint distribution involving the the last exit time, the occupation time up to the last exit time and the value of the process prior to the last exit time. \cite{LC16} adopts a time reversal argument and uses a previous result on occupation time.

Our approach is different from those  in \cite{CY05}, \cite{Bau09} and \cite{LC16}.
We use the Poisson approach of \cite{LZ14} and \cite{AIZ16} to find  Laplace transforms of  the occupation times.
To overcome difficulties due to the non-stoping last exit time  and the  infinite activity of the L\'evy processes, we adopt
 a perturbation approach, similar to that of \cite{Zhou07}, to approximate the last exit times by first exit times over certain events. We then apply the fluctuation theory for spectrally negative L\'evy processes to express the approximating Laplace transforms in terms of scale functions and Laplace exponents before taking limits to reach the desired results. We also remark that whenever there is a jump at the last exit time, our result can not be directly obtained from that in  \cite{LC16} since the last exit time is not a stopping time although the approach of \cite{LC16} can still be used to derive our results. We illustrate this with an example at the end of the paper. We also discuss the possible creeping behavior at the last exit time that is not covered in \cite{LC16}.

In  the next section we present some preliminary results for spectrally negative L\'evy processes. The main results and proofs are provided in Section  \ref{main}. Some corollaries, discussions together with examples are also given in this section.

\section{Preliminaries}\label{pre}
In this section we collect some known fluctuation results for  spectrally negative L\'evy processes to prepare for the future proofs.
Let process $X$ be a spectrally negative L\'evy process, i.e. a L\'evy process with right continuous sample paths and  with no positive jumps. Then for any
$t>0$,
\[\frac{1}{t}\log\bE e^{\lambda X_t}=\psi(\lambda) \]
 with
\[\psi(\lambda)=a\lambda+\frac{1}{2}\sigma^2\la^2+\int_{-\infty}^0 (e^{\la x}-1-\la x1_{x>-1})\Pi(dx),\]
where $\Pi$ is a $\sigma$-finite measure on $(-\infty, 0)$ satisfying $\int_{-\infty}^0 (x^2\wedge 1) \Pi(dx)<\infty$. Write $\Phi: [0,\infty)\mapsto [0,\infty)$ for the inverse function of $\psi$.

Scale functions play a key role in analyzing spectrally negative
L\'evy processes. The scale function $W^{(p)}$ for $X$ is a nonnegative, increasing function specified
by the following Laplace transform. For any $q\geq 0$,
\[\int_0^\infty e^{-\lambda x}W^{(q)}(x)dx=\frac{1}{\psi(\lambda)-q}, \,\,\, \lambda>\Phi(q),\]
and $W^{(q)}(x):=0$ for $x<0$.
We also need the scale functions
\[Z^{(p)}(x):=1+p\int_0^x W^{(p)}(y)dy\]
 and
\[Z^{(p)}(x,\theta):=e^{\theta x}\left(1+(p-\psi(\theta))\int_0^x e^{-\theta y}W^{(p)}(y)dy \right).\]
with $Z^{(p)}(x):=1$ and $Z^{(p)}(x,\theta):=e^{\theta x} $ for $x<0$, respectively. Write $W(x), Z(x), Z(x,\theta)$ for $W^{(0)}(x), Z^{(0)}(x), Z^{(0)}(x,\theta)$, respectively.

For all
$p, q\geq 0$, if $X$ is of bounded variation, then $W^{(p)}(0)=W^{(q)}(0)$; if $X$ is of unbounded variation, then
$$\lim_{\ep\goto 0+}\frac{W^{(p)}(\ep)}{W^{(q)}(\ep)}=1; $$
 see \cite{Ky14}.

For any $a\in\bR$, define one-sided exit times
\[\tau^+_a:=\inf\{t>0: X_t>a\}\]
and
\[\tau^-_a:=\inf\{t>0: X_t<a\}\]
with the convention $\inf\emptyset=\infty$.

We collect some fluctuation  identities for $X$ concerning the solutions to the exit problems and potential measures, which can be found in \cite{KKR13} and \cite{Ky14}. For event $A$ and random variable $Y$, write $\bE [Y; A]=\bE Y1_A$.

We first present solutions to the exit problems.
\[\bE [e^{-q\tau^+_a};  \tau^+_a<\infty]=e^{-\Phi(q)a}, \]
\[ \bE_x [e^{-q\tau^-_0};  \tau^-_0<\infty]=Z^{(q)}(x)-\frac{q}{\Phi(q)}W^{(q)}(x),\]
\begin{equation}\label{idenA}
\begin{split}
\bE_x [e^{-u\tau^-_0+vX_{\tau^-_0}}; {\tau^-_0<\infty}]&=Z^{(u)}(x,v)-\frac{u-\psi(v)}{\Phi(u)-v}W^{(u)}(x) \\
&=e^{vx}\left(1+(u-\psi(v))\int_0^x e^{-vy}W^{(u)}(y)\dd y-\frac{u-\psi(v)}{\Phi(u)-v}e^{-vx}W^{(u)}(x) \right)
\end{split}
\end{equation}
and
\[\bE_x [e^{-u\tau^-_0+\Phi(u)X_{\tau^-_0}}; {\tau^-_0<\infty}]=e^{\Phi(u)x}-\frac{1}{\Phi'(u)}W^{(u)}(x), \]
\begin{equation}\label{idenD}
\begin{split}
&\bE_x [ e^{-u\tau^-_0+vX_{\tau^-_0}}; {\tau^-_0<\tau^+_a}]\\
&=Z^{(u)}(x,v)-\frac{u-\psi(v)}{\Phi(u)-v}W^{(u)}(x)-\frac{W^{(u)}(x)}{W^{(u)}(a)}\left(Z^{(u)}(a,v)-\frac{u-\psi(v)}{\Phi(u)-v}W^{(u)}(a) \right) \\
&=Z^{(u)}(x,v)-\frac{W^{(u)}(x)}{W^{(u)}(a)}Z^{(u)}(a,v).
\end{split}
\end{equation}

We then present expressions for potential measures. For $a>0$,
\begin{equation}\label{idenB}
\begin{split}
\int_0^\infty e^{-qt} \bP\{X_t\in dy, t<\tau^+_a\}dt=\left( e^{-\Phi(q)a}W^{(q)}(a-y)-W^{(q)}(-y)\right)dy.
\end{split}
\end{equation}
For $a\leq 0$,
\begin{equation}\label{idenC}
\begin{split}
\int_0^\infty e^{-qt} \bP\{X_t\in dy, t<\tau^-_a\}dt=\left(e^{-\Phi(q)(y-a)}W^{(q)}(-a)-W^{(q)}(-y) \right)dy.
\end{split}
\end{equation}



For an independent exponential random variable $e_r$ with rate $r$, let
\[T^+(r):=\sup\{0<t\leq  e_r: X_t< 0 \}\]
and
\[T^-(r):=\sup\{0<t\leq  e_r : X_t> 0 \}\]
 with the convention $\sup\emptyset=0$. Then $T^+(r)$ and $T^-(r)$ are, respectively,  the last times of exiting the negative and positive part of the
 real line up to the time $e_r$. Write $T^+$ and $T^-$ for $T^+(0)$ and $T^-(0)$, respectively.

In this paper,  for constants $p, q, \theta, r >0$ and $x\in\bR$,  we want to find expressions for  $$\bE_x[e^{-pT^+(r)-q \int_0^{T^+(r)} 1_{X_s <0} ds+\theta X(T^+(r))}; T^+(r)=e_r],$$
 $$\bE_x[e^{-pT^+(r)-q \int_0^{T^+(r)} 1_{X_s <0} ds}; T^+(r)<e_r]$$
 and
 \[\bE_x [e^{-pT^-(r)+\theta X(T^-(r))-q \int_0^{T^-(r)}  1_{X_s >0} ds}; T^-(r)<e_r],\]
 \[\bE_x [e^{-pT^-(r)-q \int_0^{T^-(r)}  1_{X_s >0} ds-\theta X(T^-(r))}; T^-(r)=e_r]. \]

\section{Main Results}\label{main}

We first proceed to find expressions for
\[\omega^+_1(x):=\bE_x[e^{-pT^+(r)-q \int_0^{T^+(r)} 1_{X_s \leq 0} ds}; T^+(r)<e_r]\]
and
\[\omega^+_2(x):=\bE_x[e^{-pT^+(r)-q \int_0^{T^+(r)} 1_{X_s \leq 0} ds+\theta X(T^+(r))}; T^+(r)=e_r]. \]

\begin{thm}
For any $p, q, r, \theta>0$ and $x\in\bR$, we have
 \begin{equation}\label{thm1a}
\begin{split}
\omega^+_1(x)&=\frac{r(W^{(r)}(x)-W^{(p+r)}(x))}{\Phi(r)}+\frac{r(\Phi(p+q+r)-\Phi(p+r))}{q\Phi(r)}Z^{(p+r)}(x,\Phi(p+q+r))\\
&\quad-r\int_0^x W^{(r)}(y)dy
 \end{split}
\end{equation}
and
\begin{equation}\label{thm1b}
\begin{split}
\omega^+_2(x)
&=\frac{r}{p+q+r-\psi(\theta)}\left(-\frac{p+r-\psi(\theta)}{q}\times\frac{\Phi(p+q+r)-\Phi(p+r)}{\Phi(p+r)}\right) Z^{(p+r)}(x, \Phi(p+q+r))\\
&\quad +\frac{r}{p+q+r-\psi(\theta)}Z^{(p+r)}(x,\theta).\\
 \end{split}
\end{equation}
\end{thm}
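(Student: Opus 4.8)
The plan is to combine the Poisson (marked) representation of the occupation functional with a perturbation argument that approximates the non-stopping last exit time $T^+(r)$ by genuine first-passage times, after which the fluctuation identities \eqref{idenA}--\eqref{idenC} can be applied and simplified into scale functions. The first observation, which organizes everything, is a path-structure dichotomy. Since $X$ has no positive jumps, on $\{T^+(r)<e_r\}$ the process must leave $(-\infty,0)$ for the last time by \emph{creeping}: there is a sequence $s_n\uparrow T^+(r)$ with $X_{s_n}<0$, so $X_{T^+(r)-}\le 0$, while right-continuity and $X_s\ge 0$ for $s>T^+(r)$ force $X_{T^+(r)}\ge 0$; absence of positive jumps then gives $X_{T^+(r)-}=X_{T^+(r)}=0$. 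This is exactly why $\omega^+_1$ carries no factor $e^{\theta X(T^+(r))}$. On the complementary event $\{T^+(r)=e_r\}$ one has $X_{e_r}<0$, so $\omega^+_2$ is a Laplace transform of the killed process evaluated strictly below $0$.

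The occupation term is handled by the Poisson approach of \cite{LZ14,AIZ16}: writing $e^{-q\int_0^t 1_{X_s\le 0}\,ds}$ as the probability that an independent rate-$q$ Poisson clock rings at no time $s\le t$ with $X_s\le 0$, I replace this factor by an extra killing of rate $q$ that is active only below $0$. Together with the discount $p$ and the exponential horizon $e_r$, the process then runs at effective rate $p+q+r$ while below $0$ and at rate $p+r$ while above $0$, up to $T^+(r)$; after $T^+(r)$ the discount $p$ stops acting, so the final sojourn above $0$ sees only the rate-$r$ horizon, which is the source of $W^{(r)}$, $\int_0^x W^{(r)}$ and $\Phi(r)$ in \eqref{thm1a}. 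The transform $\omega^+_2$ is then the easy one: conditioning on $e_r=t$ gives $\omega^+_2(x)=\tfrac{r}{p+r}\,\bE_x[e^{-q\int_0^{e_{p+r}}1_{X_s\le 0}ds+\theta X_{e_{p+r}}};\,X_{e_{p+r}}<0]$ for an independent rate-$(p+r)$ clock $e_{p+r}$, which is the resolvent of the doubly-killed process restricted to $(-\infty,0)$ and integrated against $e^{\theta y}$. Evaluating it through the potential measure \eqref{idenC} with below-$0$ rate $p+q+r$ and the exit identity \eqref{idenA}, and using $\psi(\Phi(p+q+r))=p+q+r$, produces both the denominator $p+q+r-\psi(\theta)$ and the two $Z^{(p+r)}(x,\cdot)$ terms of \eqref{thm1b}.

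For $\omega^+_1$ I set $g=T^+(r)$ and decompose the path at the creeping passage to $0$. Following \cite{Zhou07}, for small $\ep>0$ I approximate $g$ by a first-passage stopping time, detected by requiring the post-passage path to stay above $-\ep$ until $e_r$, so that on the approximating events the strong Markov property applies and I may use the two-sided and one-sided identities \eqref{idenD}, \eqref{idenA} together with the potential measures \eqref{idenB}, \eqref{idenC}. The pre-$g$ contribution, run at rate $p+q+r$ below $0$ and $p+r$ above $0$, is read off from the resolvent of the killed process started at $x$; the creeping passage concentrates it at level $0$; and the post-$g$ factor is the probability, under the rate-$r$ horizon alone, that the process started at $0$ stays $\ge 0$ until $e_r$, which supplies the factor $r/\Phi(r)$. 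Letting $\ep\goto 0$ and using the continuity of $W^{(p)}$ near $0$ recalled in Section \ref{pre} yields \eqref{thm1a}, with the combination $\Phi(p+q+r)-\Phi(p+r)$ emerging from matching the below-$0$ rate against the above-$0$ rate across the creeping level, encoded through $Z^{(p+r)}(x,\Phi(p+q+r))$.

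The main obstacle is the $\omega^+_1$ analysis. Because $g=T^+(r)$ is not a stopping time, and because infinite activity means the final excursion above $0$ need not separate from level $0$, the perturbation must be arranged so that the approximating first-passage events genuinely capture ``last exit with no return before $e_r$'' and so that the $\ep\goto 0$ limit both exists and cleanly isolates the creeping component at $0$. In particular, pinning down the constants $r/\Phi(r)$ and $(\Phi(p+q+r)-\Phi(p+r))/q$ requires carefully tracking, in the limit, how the below-$0$ rate $p+q+r$ enters the above-$0$ scale function via the parameter $\Phi(p+q+r)$; this coupling of two distinct killing rates across the creeping level is the delicate point, and it is where the argument genuinely departs from the stopping-time computations underlying \eqref{idenA}--\eqref{idenD}.
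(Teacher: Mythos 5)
Your plan shares the paper's architecture --- Poisson marking of the occupation functional, a perturbation in the spirit of \cite{Zhou07}, and the fluctuation identities --- and your structural observations are correct: by spectral negativity $X(T^+(r))=0$ on $\{T^+(r)<e_r\}$, while on $\{T^+(r)=e_r\}$ one has $X_{e_r}<0$, so that the reduction
\begin{equation*}
\omega^+_2(x)=\frac{r}{p+r}\,\bE_x\Bigl[e^{-q\int_0^{e_{p+r}}1_{X_s\le 0}\,ds+\theta X_{e_{p+r}}};\ X_{e_{p+r}}<0\Bigr],
\end{equation*}
with $e_{p+r}$ an independent exponential time of rate $p+r$, is valid and is a clean way to organize $\omega^+_2$. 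The gap is in what you do next: for both $\omega^+_1$ and $\omega^+_2$ you treat the ``resolvent of the doubly-killed process'' (rate $p+q+r$ below $0$, rate $p+r$ above $0$) as something you can ``read off'' from \eqref{idenA} and \eqref{idenC}. You cannot: those identities are for a single, spatially homogeneous killing rate, and applying \eqref{idenC} with rate $p+q+r$ ignores the excursions above $0$ before $e_{p+r}$, during which the $q$-clock is switched off. The two-rate resolvent is exactly the unknown of the problem. The missing idea is the renewal (fixed-point) equation at level $0$: conditioning on the first crossing $\tau^-_0$ and the subsequent return $\tau^+_0$, and using memorylessness of $e_r$ and of the Poisson clock, one obtains a linear equation of the form $\omega^+_i(0)=a+(1-b)\,\omega^+_i(0)$ (the paper's \eqref{equA1} and \eqref{equA2}); solving it is what produces the factor $(\Phi(p+q+r)-\Phi(p+r))/q$ in \eqref{thm1a} and \eqref{thm1b}. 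Nothing in your outline generates this equation or any substitute for it, so the coefficients you are asked to prove are never actually derived.

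The same omission is fatal in the unbounded-variation case, where your assembly of $\omega^+_1$ as (pre-$g$ resolvent concentrated at $0$ by creeping) times (probability that the post-$g$ path stays nonnegative until $e_r$) degenerates: there $0$ is regular for $(-\infty,0)$, so $\bP_0\{\tau^-_0>e_r\}=0$ and $W^{(r)}(0)=0$, and your product reads $0=0\times(\cdots)$. The constants $r/\Phi(r)$ and $(\Phi(p+q+r)-\Phi(p+r))/q$ do not come from any single factor; they arise only as the limit of a ratio $a_\ep/b_\ep$ of two vanishing quantities, $a_\ep\approx \frac{r}{\Phi(r)}W^{(r)}(\ep)$ and $b_\ep\approx\frac{q}{\Phi(p+q+r)-\Phi(p+r)}W^{(p+r)}(\ep)$, obtained by solving the $\ep$-level fixed-point equation (the paper's \eqref{equB1} and \eqref{equB2}) and then using $W^{(r)}(\ep)/W^{(p+r)}(\ep)\goto 1$; one must also check that paths on which a $q$-mark falls while $X\in[0,\ep]$ contribute only $o(W(\ep))$, which the paper controls via \eqref{idenB}. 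You correctly flag this coupling of the two rates across the creeping level as ``the delicate point,'' but flagging it is not resolving it: without setting up and solving the fixed-point equations, neither the existence of the $\ep\goto 0+$ limit nor the values of the constants is established.
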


\begin{proof}
The proof is carried out in two steps.
 We first assume that $X$ is of bounded variation.  Write $e_q$ for an independent  exponential random variable.
Let $T_1<T_2<\ldots$ be the sequence of arrival times for an
independent Poisson process with rate $q$. Let
\[A^+:=\cap_{i: T_i<T^+(r) }\{ X_{T_i}> 0 \}.\]
Then
\[\omega^+_1(x)= \bE_x\left[ e^{-pT^+(r)}; T^+(r)<e_r,  A^+\right]\]
and
\[\omega^+_2(x)= \bE_x\left[ e^{-pT^+(r)+\theta X(e_r)}; T^+(r)=e_r,  A^+\right]\]
by a property of Poisson process; see a similar argument in \cite{LZ14}.
Then conditioning on $T_1$, by the the memoryless property for exponential distribution and the strong Markov property,
 \begin{equation}\label{equA1}
\begin{split}
\omega^+_1(0)&=\bP\{ \tau^-_0>e_r\}+\int_{-\infty}^0 \bE[e^{-p\tau^-_0}; \tau^-_0< e_r, X_{\tau^-_0}\in dy]
\bE_y[e^{-p\tau^+_0}; \tau^+_0<e_q\wedge e_r]\omega^+_1(0)  \\
&=1-\bE e^{-r\tau^-_0} +\int_{-\infty}^0 \bE[e^{-(p+r)\tau^-_0};  X_{\tau^-_0}\in dy]\bE_y e^{-(p+q+r)\tau^+_0}\omega^+_1(0) \\
&= \frac{r}{\Phi(r)}W^{(r)}(0)+\bE e^{-(p+r)\tau^-_0+\Phi(p+q+r) X(\tau^-_0)}\omega^+_1(0) \\
&=\frac{r}{\Phi(r)}W^{(r)}(0)+\left(1-\frac{q}{\Phi(p+q+r)-\Phi(p+r)}W^{(p+r)}(0) \right)\omega^+_1(0),
 \end{split}
\end{equation}
where we have used the identity (\ref{idenA}). Solving (\ref{equA1}) for $\omega^+_1(0)$ we have
\begin{equation}\label{equA3}
\begin{split}
\omega^+_1(0)&= \frac{r}{q\Phi(r)}(\Phi(p+q+r)-\Phi(p+r)).\\
\end{split}
\end{equation}

Similarly,
 \begin{equation}\label{equA2}
\begin{split}
\omega^+_2(0)&=\int_{-\infty}^0 \bE[e^{-p\tau^-_0}; \tau^-_0<e_r, X_{\tau^-_0}\in dy]\\
&\qquad\quad\times\left(\bE_y[e^{-p\tau^+_0}; \tau^+_0<e_q\wedge e_r]\omega^+_2(0)+\bE_y[e^{-pe_r+\theta X(e_r)}; e_r<\tau^+_0\wedge e_q]\right) \\
&=\bE e^{-(p+r)\tau^-_0+\Phi(p+q+r) X(\tau^-_0)}\omega^+_2(0)\\
&\quad+\frac{r}{p+q+r-\psi(\theta)}\left(\bE e^{-(p+r)\tau^-_0+\theta X(\tau^-_0)}-\bE e^{-(p+r)\tau^-_0+\Phi(p+q+r) X(\tau^-_0)} \right) \\
&=\left(1-\frac{q}{\Phi(p+q+r)-\Phi(p+r)}W^{(p+r)}(0) \right)\omega^+_2(0)\\
&\quad+\frac{r}{p+q+r-\psi(\theta)}\left(\frac{q}{\Phi(p+q+r)-\Phi(p+r)}W^{(p+r)}(0)-\frac{p+r-\psi(\theta)}{\Phi(p+r)-\theta}W^{(p+r)}(0) \right),\\
\end{split}
\end{equation}
where for $y<0$ and $\theta>p+q+r$, by identity (\ref{idenB}) we have
\begin{equation*}
\begin{split}
\bE_y[e^{-pe_r+\theta X(e_r)}; e_r<\tau^+_0\wedge e_q]
&=\bE_y[e^{-(p+q)e_r+\theta X(e_r)}; e_r<\tau^+_0]\\
&=\int_{-\infty}^0 e^{\theta z}r\left( e^{\Phi(p+q+r)y}W^{(p+q+r)}(-z)-W^{(p+q+r)}(y-z)\right)\\
&=e^{\Phi(p+q+r)y}\frac{r}{\psi(\theta)-p-q-r}-e^{\theta y}\frac{r}{\psi(\theta)-p-q-r}.\\
\end{split}
\end{equation*}
Solving (\ref{equA2}) for $\omega^+_2(0)$ we have
\begin{equation}
\begin{split}
\omega^+_2(0)
&=\frac{r}{p+q+r-\psi(\theta)}\left(1-\frac{p+r-\psi(\theta)}{q}\times\frac{\Phi(p+q+r)-\Phi(p+r)}{\Phi(p+r)-\theta}\right). \\
\end{split}
\end{equation}

We now consider process $X$ with  sample paths of unbounded variation.  For any $\epsilon>0$, let
\[T^+_\ep(r):=\inf\{T^+(r)\leq s\leq e_r: X_s> \ep   \} \]
with the convention $\inf\emptyset= e_r$. Write $T^+_\ep=T^+_\ep(0) $. Then either $T^+(r)<e_r $ or $T^+(r)=e_r $, we have $T^+_\ep(r)\downarrow T^+(r)$ as $\ep\goto 0+ $. Write
\[\omega^+_{1,\ep}(x):= \bE_x [e^{-pT^+_\ep(r)-q \int_0^{T^+_\ep(r)} 1_{X_s \leq 0} ds};T^+_\ep(r)<e_r]\]
and
\[\omega^+_{2,\ep}(x):= \bE_x [e^{-pT^+_\ep(r)-q \int_0^{T^+_\ep(r)} 1_{X_s \leq 0} ds+\theta X(e_r)};T^+_\ep(r)=e_r].\]
Then for
\[A^+_\ep:=\cap_{i: T_i<T^+_\ep(r) }\{ X_{T_i}> 0 \},\]
\[\omega^+_{1,\ep}(x)= \bE_x\left[ e^{-pT^+_\ep(r)}; T^+_\ep(r)<e_r, A^+_\ep\right],\]
\[\omega^+_{2,\ep}(x)= \bE_x\left[ e^{-pT^+_\ep(r)+\theta X(e_r)}; T^+_\ep(r)=e_r, A^+_\ep\right],\]
 by the property of Poisson process. It follows that
 \[ \lim_{\ep\goto 0+}\omega^+_{i,\ep}(x)=\omega^+_i(x), \,\,\,\, i=1,2.  \]

 Notice that by (\ref{idenB})
\begin{equation}
\begin{split}
&\bE[e^{-pT^+_\ep(r)}; T^+_\ep(r)<e_r, T_1<\tau^+_\ep\wedge e_r, A^+]\\
&\leq \bP\{  T_1<\tau^+_\ep, 0\leq X_{T_1}\leq \ep\}\\
&=o(W(\ep)).
\end{split}
\end{equation}
 Comparing the first Poisson arrival time $T_1$ with $\tau^+_\ep $ and $e_r $ we have
\begin{equation}\label{equB1}
\begin{split}
\omega^+_{1,\ep}(0)&=\bE[e^{-p\tau^+_\ep};\tau^+_\ep<e_q\wedge e_r]\left(\bP_\ep[\tau^-_0>e_r\}\right.\\
&\qquad\qquad\left.+\int_{-\infty}^0 \bE_\ep[e^{-p\tau^-_0}; \tau^-_0<e_r, X_{\tau^-_0}\in dy]\bE_y[e^{-p\tau^+_0}; \tau^+_0<e_q\wedge e_r]\omega^+_{1,\ep}(0)\right)  \\
&\quad +o(W(\ep))\\
&=e^{-\Phi(p+q+r)\ep}\left\{1-Z^{(r)}(\ep)+\frac{r}{\Phi(r)}W^{(r)}(\ep)+\bE_\ep e^{-(p+r)\tau^-_0+\Phi(p+q+r) X(\tau^-_0)}\omega^+_{1,\ep}(0) \right\}\\
&\qquad+o(W(\ep)) \\
&=e^{-\Phi(p+q+r)\ep}\frac{r}{\Phi(r)}W^{(r)}(\ep) +o(W(\ep)) \\
&\quad+\left(1-\frac{q}{\Phi(p+q+r)-\Phi(p+r)}e^{-\Phi(p+q+r)\ep}W^{(p+r)}(\ep) \right)\omega^+_{1,\ep}(0). \\
\end{split}
\end{equation}

Solve (\ref{equB1}) for $\omega^+_\ep(0)$ and take a limit. Then
\begin{equation*}
\begin{split}
\omega^+_1(0)&=\lim_{\ep\goto 0+}\omega^+_{1,\ep}(0) \\
&=\lim_{\ep\goto 0+}\frac{e^{-\Phi(p+q+r)\ep}\frac{r}{\Phi(r)}W^{(r)}(\ep) }{\frac{q}{\Phi(p+q+r)-\Phi(p+r)}e^{-\Phi(p+q+r)\ep}W^{(p+r)}(\ep)}\\
&=\frac{r}{q\Phi(r)}(\Phi(p+q+r)-\Phi(p+r)).
\end{split}
\end{equation*}

Similarly, for $\theta> 0$,
\begin{equation*}
\begin{split}
&\bE[e^{-pT^+_\ep(r)+\theta X(T^+_\ep(r))}; T^+_\ep(r)=e_r, T_1<\tau^+_\ep\wedge e_r, A^+_\ep ]\\
&\leq\bP\{ T_1<\tau^+_\ep, 0\leq X_{T_1}\leq \ep \}\\
&=o(W(\ep)).
\end{split}
\end{equation*}
Then for $\theta>p+q+r$,
\begin{equation}\label{equB2}
\begin{split}
\omega^+_{2,\ep}(0)
&=\bE[e^{-p\tau^+_\ep};\tau^+_\ep<e_q\wedge e_r]\int_{-\infty}^0 \bE_\ep[e^{-p\tau^-_0}; \tau^-_0<e_r, X_{\tau^-_0}\in dy]\\
&\qquad\times\left(\bE_y[e^{-p\tau^+_0}; \tau^+_0<e_q\wedge e_r]\omega^+_{2,\ep}(0)+\bE_y[e^{-pe_r+\theta X(e_r)}; e_r<\tau^+_0\wedge e_q]\right)  \\
&\quad+\bE[e^{-pe_r+\theta X(e_r)}; e_r<\tau^+_\ep\wedge e_q, X_{e_r}<0]+o(W(\ep))\\
&=e^{-\Phi(p+q+r)\ep}\bE_\ep e^{-(p+r)\tau^-_0+\Phi(p+q+r) X(\tau^-_0)}\omega^+_{2,\ep}(0)\\
&\quad+e^{-\Phi(p+q+r)\ep}\frac{r}{p+q+r-\psi(\theta)}\left(\bE_\ep e^{-(p+r)\tau^-_0+\theta X(\tau^-_0)}-\bE_\ep e^{-(p+r)\tau^-_0+\Phi(p+q+r) X(\tau^-_0)} \right)  \\
&\quad+\frac{r}{p+q+r-\psi(\theta)}(1-e^{(\theta-\Phi(p+q+r))\ep}) +o(W(\ep))\\
&=\left(1-\frac{q}{\Phi(p+q+r)-\Phi(p+r)}e^{-\Phi(p+q+r)\ep}W^{(p+r)}(\ep) \right)\omega^+_{2,\ep}(0)+o(W(\ep)) \\
&\quad+\frac{r}{p+q+r-\psi(\theta)}\left(\frac{q}{\Phi(p+q+r)-\Phi(p+r)}W^{(p+r)}(\ep) \right.\\
&\qquad\qquad\qquad\qquad\left.-e^{(\theta-\Phi(p+q+r))\ep}\frac{p+r-\psi(\theta)}{\Phi(p+r)-\theta}W^{(p+r)}(\ep) \right),\\
\end{split}
\end{equation}
where by (\ref{idenB})
\begin{equation*}
\begin{split}
&\bE[e^{-pe_r+\theta X(e_r)}; e_r<\tau^+_\ep\wedge e_q, X_{e_r}<0]\\
&=\bE[e^{-(p+q)e_r+\theta X(e_r)}; e_r<\tau^+_\ep]\\
&=\int_{-\infty}^0 e^{\theta y}r\left(e^{-\Phi(p+q+r)\ep}W^{(p+q+r)}(\ep-y)-W^{(p+q+r)}(-y) \right)dy\\
&=re^{-\Phi(p+q+r)\ep}\int_{-\infty}^{-\ep}e^{\theta(y+\ep)}W^{(p+q+r)}(-y)dy-r\int_{-\infty}^0 e^{\theta y}W^{(p+q+r)}(-y)dy\\
&=e^{(\theta-\Phi(p+q+r))\ep}\frac{r}{\psi(\theta)-p-q-r}-\frac{r}{\psi(\theta)-p-q-r}+o(W(\ep)).
\end{split}
\end{equation*}

Then
\begin{equation}
\begin{split}
\omega^+_2(0)&=\lim_{\ep\goto 0+}\omega^+_{2,\ep}(0)\\
&=\frac{\frac{r}{p+q+r-\psi(\theta)}\left(\frac{q}{\Phi(p+q+r)-\Phi(p+r)}-e^{(\theta-\Phi(p+q+r))\ep}\frac{p+r-\psi(\theta)}{\Phi(p+r)-\theta}\right) }{\frac{q}{\Phi(p+q+r)-\Phi(p+r)}}\\
&=\frac{r}{p+q+r-\psi(\theta)}\left(1-\frac{p+r-\psi(\theta)}{q}\times\frac{\Phi(p+q+r)-\Phi(p+r)}{\Phi(p+r)-\theta}\right).\\
\end{split}
\end{equation}

For $x<0$,
\begin{equation*}\label{omega1a}
\begin{split}
\omega^+_1(x)&=\bE_x[e^{-(p+q)\tau^+_0}; \tau^+_0<e_r]\omega^+_1(0)\\
&=e^{\Phi(p+q+r)x}\frac{r}{q\Phi(r)}(\Phi(p+q+r)-\Phi(p+r)).
\end{split}
\end{equation*}
Similarly,
\begin{equation}\label{omega2a}
\begin{split}
\omega^+_2(x)&=\bE_x[e^{-(p+q)e_r+\theta X(e_r)}; e_r<\tau^+_0]+  \bE_x[e^{-(p+q)\tau^+_0}; \tau^+_0<e_r]\omega^+_2(0)  \\
&=\frac{r}{p+q+r-\psi(\theta)}\left(e^{\theta x}-e^{\Phi(p+q+r)x}\frac{p+r-\psi(\theta)}{q}\times\frac{\Phi(p+q+r)-\Phi(p+r)}{\Phi(p+r)-\theta}\right).
\end{split}
\end{equation}
By analytical extension, (\ref{omega2a}) holds for all $\theta\geq 0$.


For $x>0$,
\begin{equation*}
\begin{split}
\omega^+_1(x)&= \bP_x\{\tau^-_0>e_r\}+\int_{-\infty}^0\bE_x[e^{-p\tau^-_0}; \tau^-_0<e_r, X_{\tau^-_0}\in dy]\bE_y[e^{-(p+q)\tau^+_0}; \tau^+_0<e_r]\omega^+_1(0)\\
&=\frac{r}{\Phi(r)}W^{(r)}(x)-r\int_0^x W^{(r)}(y)dy\\
&\quad+\frac{r(\Phi(p+q+r)-\Phi(p+r))}{q\Phi(r)}\left\{Z^{(p+r)}(x, \Phi(p+q+r)) -\frac{q}{\Phi(p+q+r)-\Phi(p+r)}W^{(p+r)}(x) \right\}\\
&=\frac{r(W^{(r)}(x)-W^{(p+r)}(x) )}{\Phi(r)}-r\int_0^x W^{(r)}(y)dy\\
&\quad+\frac{r(\Phi(p+q+r)-\Phi(p+r))}{q\Phi(r)}Z^{(p+r)}(x, \Phi(p+q+r))\\
\end{split}
\end{equation*}
and
\begin{equation*}
\begin{split}
\omega^+_2(x)
&=\int_{-\infty}^0\bE_x[e^{-p\tau^-_0}; \tau^-_0<e_r, X_{\tau^-_0}\in dy]\\
&\qquad\times\left(\bE_y[e^{-(p+q)\tau^+_0}; \tau^+_0<e_r]
\omega^+_2(0)+\bE_y[e^{-(p+q)e_r+\theta X(e_r)}; e_r<\tau^+_0 ] \right)\\
&=\bE_x e^{-(p+r)\tau^-_0+\Phi(p+q+r)X(\tau^-_0)}\omega^+_2(0)\\
&\quad+\frac{r}{p+q+r-\psi(\theta)}\left(\bE_x e^{-(p+r)\tau^-_0+\theta X(\tau^-_0)}-\bE_x e^{-(p+r)\tau^-_0+\Phi(p+q+r)X(\tau^-_0)}  \right)\\
&=\frac{r}{p+q+r-\psi(\theta)}\left(-\frac{p+r-\psi(\theta)}{q}\times\frac{\Phi(p+q+r)-\Phi(p+r)}{\Phi(p+r)-\theta}\right) Z^{(p+r)}(x, \Phi(p+q+r))\\
&\quad +\frac{r}{p+q+r-\psi(\theta)}Z^{(p+r)}(x,\theta).\\
\end{split}
\end{equation*}
The desired results then follow.
\end{proof}

\begin{rem}
For $x<0$,
\begin{equation}
\begin{split}
\bE_x e^{-pT^+(r)}&=\lim_{q,\theta\goto 0+}(\omega^+_1(0)+\omega^+_2(0))\\
&=\frac{r}{p+r}+re^{\Phi(p+r)x}\Phi'(p+r)\left( \frac{1}{\Phi(r)}-\frac{1}{\Phi(p+r)}\right).
\end{split}
\end{equation}

For $x>0$,
\begin{equation}
\begin{split}
\bE_x e^{-pT^+(r)}
&=\frac{r(W^{(r)}(x)-W^{(p+r)}(x) )}{\Phi(r)}+re^{\Phi(p+r)x}\Phi'(p+r)\left(\frac{1}{\Phi(r)}-\frac{1}{\Phi(p+r)}\right)\\
&\quad+1-Z^{(r)}(x)  +\frac{r}{p+r}Z^{(p+r)}(x), \\
\end{split}
\end{equation}
which recovers Theorem 2 in \cite{Bau09}.
\end{rem}

\begin{rem}
For $\psi'(0+)>0$,
letting $r, q\goto 0+$   we have
\[\bE_x e^{-pT^+}= \psi'(0+)(W(x)-W^{(p)}(x))+e^{\Phi(p)x}\psi'(0+)\Phi'(p),  \]
which recovers  a result in \cite{CY05}; letting $r, p\goto
0+ $ for $\psi'(0+)>0$, we have
\[\bE_x e^{-q \int_0^\infty 1_{X_s <0} ds}=\frac{\psi'(0+)}{q}\Phi(q)Z(x,\Phi(q)), \]
which recovers  Corollary 3 (ii) of \cite{LRZ14}.
\end{rem}

We also want to find
$$\omega^-_1(x):=\bE_x [e^{-pT^-(r)+\theta X_{T^-(r)}-q \int_0^{T^-(r)}  1_{X_s \geq 0} ds}; T^-(r)<e_r]$$
and
\[ \omega^-_2(x):=\bE_x [e^{-pT^-(r)-q \int_0^{T^-(r)}  1_{X_s \geq 0} ds-\theta X(T^-(r))}; T^-(r)=e_r].\]

\begin{thm}\label{thm2}
For any $p, q, r, \theta>0$ and $x\in \bR$, we have
\begin{equation}\label{down_a}
\begin{split}
\omega^-_1(x)&= \frac{1}{q}(\Phi(p+q+r)-\Phi(p+r))Z^{(p+q+r)}(x,\Phi(p+r))\\
&\qquad\times
\left(\frac{p+q+r-\psi(\theta+\Phi(r))}{\Phi(p+q+r)-\theta-\Phi(r)}-\frac{p+q+r-\psi(\theta)}{\Phi(p+q+r)-\theta}\right)\\
&\quad+Z^{(p+q+r)}(x,\theta)-Z^{(p+q+r)}(x,\theta+\Phi(r))
\end{split}
\end{equation}
and
\begin{equation}
\begin{split}
\omega^-_2(x)&=\frac{r}{q}\times\frac{\Phi(p+q+r)-\Phi(p+r)}{\theta+\Phi(p+q+r)}Z^{(p+q+r)}(x,\Phi(p+r))
-r\int_0^x e^{-\theta y}W^{(p+q+r)}(x-y)dy.
\end{split}
\end{equation}
\end{thm}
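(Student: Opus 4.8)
The plan is to mirror the two-step proof of the preceding theorem, now following the last exit from the positive half-line. Let $T_1<T_2<\cdots$ be the arrival times of an independent Poisson process of rate $q$ and set $A^-:=\cap_{i:\,T_i<T^-(r)}\{X_{T_i}<0\}$. By the thinning property of the Poisson process (as in the preceding proof) one has
\[
\omega^-_1(x)=\bE_x\left[e^{-pT^-(r)+\theta X_{T^-(r)}};\,T^-(r)<e_r,\,A^-\right]
\quad\text{and}\quad
\omega^-_2(x)=\bE_x\left[e^{-pT^-(r)-\theta X(e_r)};\,T^-(r)=e_r,\,A^-\right].
\]
The one structural change from the previous theorem is that the forbidden set for the Poisson points is now $\{X_s\ge0\}$: in each excursion the rate-$q$ clock constrains only the sojourns \emph{above} $0$, while sojourns below $0$ are free. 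This is the exact mirror of the upward problem and determines where the factor $e_q$ enters.

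First assume $X$ has bounded variation and set up the recursion at $x=0$ by conditioning on $T_1$ and using the strong Markov property together with the memorylessness of $e_r$ and $e_q$. From $0$ the process stays above $0$ until $\tau^-_0$, accumulating occupation time and exposed to killing by $e_q\wedge e_r$, and on surviving it jumps to some $y<0$. From $y$ two things can end the last exit: the process may be killed before returning to $0$, in which case $T^-(r)=\tau^-_0$ and $X_{T^-(r)}=y\le 0$ — a genuinely new term, absent in the upward problem, created by the downward jump — contributing $e^{\theta y}\,\bP_y\{e_r<\tau^+_0\}=e^{\theta y}(1-e^{\Phi(r)y})$; or it returns to $0$ before being killed, contributing the factor $e^{\Phi(p+r)y}$ and restarting the problem. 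Evaluating $\bE_0[e^{-(p+q+r)\tau^-_0+vX_{\tau^-_0}}]$ from (\ref{idenA}) at $x=0$ for $v\in\{\theta,\,\theta+\Phi(r),\,\Phi(p+r)\}$ (using $\psi(\Phi(p+r))=p+r$) turns each recursion into a scalar linear equation that I solve for $\omega^-_1(0)$ and $\omega^-_2(0)$; the additional death-above-$0$ term needed for $\omega^-_2(0)$ is read off the potential measure (\ref{idenC}) with $a=0$ and equals $r\,W^{(p+q+r)}(0)/(\theta+\Phi(p+q+r))$.

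Next I pass to general $x$. For $x<0$ the process creeps up to $0$, producing the restart term $e^{\Phi(p+r)x}\omega^-_i(0)$, together with (for $\omega^-_1$ only) the ``never reaches $0$'' term $e^{\theta x}(1-e^{\Phi(r)x})$. For $x>0$ I condition on the first downward passage $\tau^-_0$ and use (\ref{idenA}) in its full $x$-dependent form, while the direct death-above-$0$ contribution is computed from (\ref{idenC}) and supplies exactly the term $-r\int_0^x e^{-\theta y}W^{(p+q+r)}(x-y)\,\dd y$. The point to verify is that, once the values of $\omega^-_1(0)$ and $\omega^-_2(0)$ are substituted, the coefficients of $W^{(p+q+r)}(x)$ cancel identically, leaving only the $Z^{(p+q+r)}(x,\cdot)$ terms and reproducing (\ref{down_a}); because $Z^{(p+q+r)}(x,v)=e^{vx}$ and $W^{(p+q+r)}(x)=0$ for $x<0$, the single resulting formula then covers all $x\in\bR$.

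Finally, for $X$ of unbounded variation I run the perturbation argument of the preceding theorem, but approximating from below: put $T^-_\ep(r):=\inf\{s\ge T^-(r):X_s<-\ep\}\wedge e_r$, so that $T^-_\ep(r)\downarrow T^-(r)$ and, by right continuity, $X(T^-_\ep(r))\to X(T^-(r))$; define the perturbed $\omega^-_{i,\ep}$ and $A^-_\ep$ accordingly, verify $\omega^-_{i,\ep}(x)\to\omega^-_i(x)$, and show that Poisson points falling in the strip $[-\ep,0]$ before $\tau^-_{-\ep}$ contribute only $o(W(\ep))$. Replacing the passage to level $0$ by the first passage to $-\ep$ yields a recursion for $\omega^-_{i,\ep}(0)$ whose leading contributions are of order $W^{(\cdot)}(\ep)$; taking the ratio and letting $\ep\goto0+$ with $W^{(a)}(\ep)/W^{(b)}(\ep)\to1$ recovers the same $\omega^-_1(0),\omega^-_2(0)$ as before. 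I expect the main obstacle to sit precisely here: unlike the upward problem, where the process re-enters the relevant region by a continuous upcrossing, here it leaves the positive half-line \emph{downward}, so the regularizing passage to $-\ep$ carries an overshoot and the crossing of $0$ at the true last exit may be either a creep (value $0$) or a jump (value strictly negative). Controlling this overshoot uniformly in $\ep$, separating the creeping and jumping contributions, and checking that the $o(W(\ep))$ strip estimates absorb the resulting ambiguity, is the delicate step that has no counterpart in the clean, continuous limit of the previous theorem.
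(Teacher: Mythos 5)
Your proposal is correct and follows the paper's proof essentially step for step: the same Poisson-arrival representation via $A^-$, the same renewal equation at $0$ (including the new kill-before-return term $e^{\theta y}\bP_y\{e_r<\tau^+_0\}=e^{\theta y}(1-e^{\Phi(r)y})$ and the death-above-$0$ term $\frac{r}{\theta+\Phi(p+q+r)}W^{(p+q+r)}(0)$ read off (\ref{idenC})), the same extension to general $x$ with cancellation of the $W^{(p+q+r)}(x)$ coefficients, and the same perturbation $T^-_\ep(r)$ with $o(W(\ep))$ strip estimates and a ratio limit for the unbounded variation case. Your closing worry is not an actual obstacle, and the paper spends no effort on it: since $T^-_\ep(r)\downarrow T^-(r)$, right continuity alone gives $X(T^-_\ep(r))\goto X(T^-(r))$ whether the last exit is a creep or a jump, and the overshoot below $-\ep$ requires no uniform control because it is exactly what identity (\ref{idenA}), applied from starting level $\ep$, computes inside the recursion.
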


\begin{proof}
If $X$ is of bounded variation,  for
\[A^-:=\cap_{i: T_i<T^-(r)}\{ X_{T_i}< 0 \},\]
we have
\[\omega^-_{1}(x)= \bE_x\left[ e^{-pT^-(r)+\theta X_{T^-(r)}-q \int_0^{ T^-(r)} 1_{X_s \geq 0} ds}; T^-(r)<e_r, A^-\right].\]
Then
\begin{equation}
\begin{split}
\omega^-_1(0)&=\int^0_{-\infty} \bE[e^{-p\tau^-_0}; \tau^-_0<e_q\wedge e_r, X_{\tau^-_0}\in dy]\\
&\qquad\times\left( \bE_y[e^{-p\tau^+_0};\tau^+_0<e_r ]\omega^-_1(0)+ e^{\theta y}\bP_y\{ e_r<\tau^+_0 \}\right)\\
&=\int^0_{-\infty} \bE[e^{-(p+q+r)\tau^-_0}; X_{\tau^-_0}\in dy]\left(\bE_y e^{\Phi(p+r)y}\omega^-_1(0)+e^{\theta y}-e^{(\theta+\Phi(r))y} \right)\\
&=\left(1-\frac{qW^{(p+q+r)}(0)}{\Phi(p+q+r)-\Phi(p+r)}\right)\omega^-_1(0)-\frac{p+q+r-\psi(\theta)}{\Phi(p+q+r)-\theta}W^{(p+q+r)}(0)\\
&\qquad+\frac{p+q+r-\psi(\theta+\Phi(r))}{\Phi(p+q+r)-\theta-\Phi(r)}W^{(p+q+r)}(0).\\
\end{split}
\end{equation}
Then
\begin{equation*}
\begin{split}
\omega^-_1(0)
&=\frac{1}{q}(\Phi(p+q+r)-\Phi(p+r))\left(\frac{p+q+r-\psi(\theta+\Phi(r))}{\Phi(p+q+r)-\theta-\Phi(r)}-\frac{p+q+r-\psi(\theta)}{\Phi(p+q+r)-\theta}\right).\\
\end{split}
\end{equation*}

Similarly,
\begin{equation*}
\begin{split}
\omega^-_2(0)&=\bE[e^{-pe_r-\theta X(e_r)}; e_r<\tau^-_0\wedge e_q]\\
&\quad+\int^0_{-\infty} \bE[e^{-p\tau^-_0}; \tau^-_0<e_q\wedge e_r, X_{\tau^-_0}\in dy]\bE_y[e^{-p\tau^+_0};\tau^+_0<e_r ]\omega^-_1(0)\\
&=\bE[e^{-(p+q)e_r-\theta X(e_r)}; e_r<\tau^-_0]+\bE e^{-(p+q+r)\tau^-_0+\Phi(p+r)\tau^-_0} \\
&=\frac{r}{\theta+\Phi(p+q+r)} W^{(p+q+r)}(0)+\left(1-\frac{qW^{(p+q+r)}(0)}{\Phi(p+q+r)-\Phi(p+r)}\right)\omega^-_2(0).\\
\end{split}
\end{equation*}
where by identity (\ref{idenC})
\begin{equation*}
\begin{split}
&\bE[e^{-(p+q)e_r-\theta X(e_r)}; e_r<\tau^-_0]\\
&=r\int_0^\infty e^{-\theta y}\int_0^\infty\bE[e^{-(p+q+r)t}; t<\tau^-_0, X_t\in dy]  \\
&=r\int_0^\infty e^{-\theta y-\Phi(p+q+r) y}W^{(p+q+r)}(0) dy\\
&=\frac{r}{\theta+\Phi(p+q+r)}W^{(p+q+r)}(0).
\end{split}
\end{equation*}
Then
\[\omega^-_2(0)=\frac{r}{q}\times\frac{\Phi(p+q+r)-\Phi(p+r)}{\theta+\Phi(p+q+r)}.\]

To deal with process $X$ with sample paths of unbounded variation,
 for $\ep>0$ and $x\in\bR$, let
\[T^-_\ep(r):=\inf\{T^-(r)\leq s\leq e_r: X_s< -\ep\}\]
with the convention $\inf\emptyset=e_r $ and
\[\omega^-_{1,\ep}(x):=\bE_x[ e^{-pT^-_\ep(r)+\theta X_{T^-_\ep(r)}-q \int_0^{ T^-_\ep(r)} 1_{X_s \geq 0} ds}; T^-_\ep(r)<e_q],\]
\[\omega^-_{2,\ep}(x):=\bE_x[ e^{-pT^-_\ep(r)-q \int_0^{ T^-_\ep(r)} 1_{X_s \geq 0} ds-\theta X(e_r)}; T^-_\ep(r)=e_q].\]
Then for
\[A^-_\ep:=\cap_{i:T_i<T^-_\ep(r)}\{ X_{T_i}< 0 \},\]
we have
\[\omega^-_{1,\ep}(x)= \bE_x\left[ e^{-pT^-_\ep(r)+\theta X_{T^-_\ep(r)}-q \int_0^{ T^-_\ep(r)} 1_{X_s \geq 0} ds}; T^-_\ep(r)<e_r, A^-_\ep\right].\]

By (\ref{idenC}),
\begin{equation*}
\begin{split}
&\bE [e^{-p T^-_\ep(r)+\theta X(T^-_\ep(r) )}; T_1<e_r\wedge \tau^-_{-\ep}, T^-_\ep(r)<e_r, A^-_\ep]\\
&\leq \bP\{ T_1<\tau^-_{-\ep}, -\ep\leq X_{T_1}\leq 0\}\\
&=o(W(\ep)).
\end{split}
\end{equation*}
Then
\begin{equation*}
\begin{split}
\omega^-_{1,\ep}(0)
&=\int_{-\infty}^{-\ep} \bE [e^{-p\tau^-_{-\ep}+\theta y}; \tau^-_{-\ep}<e_q\wedge e_r, X_{\tau^-_{-\ep}}\in dy ]\bP_y\{\tau^+_0>e_r\}  \\
&\quad+ \int_{-\infty}^{-\ep} \bE [e^{-p\tau^-_{-\ep}}; \tau^-_{-\ep}<e_q\wedge e_r, X_{\tau^-_{-\ep}}\in dy ]\bE_y [e^{-p\tau^+_0}; \tau^+_0<e_r]\omega^-_{1,\ep}(0) \\
&\quad+o(W(\ep)) \\
&=\int_{-\infty}^{-\ep} \bE [e^{-(p+q+r)\tau^-_{-\ep}+\theta y};  X_{\tau^-_{-\ep}}\in dy ](1-e^{\Phi(r)y}) \\
&\quad+ \int_{-\infty}^{-\ep} \bE [e^{-(p+q+r)\tau^-_{-\ep}};  X_{\tau^-_{-\ep}}\in dy ]e^{\Phi(p+r)y}\omega^-_{1,\ep}(0)+o(W(\ep)) \\
&=1+o(W(\ep))-\frac{p+q+r-\psi(\theta)}{\Phi(p+q+r)-\theta}e^{-\theta\ep}W^{(p+q+r)}(\ep)\\
&\quad-1+o(W(\ep))+\frac{p+q+r-\psi(\theta+\Phi(r))}{\Phi(p+q+r)-\theta-\Phi(r)}e^{-(\theta+\Phi(r))\ep}W^{(p+q+r)}(\ep)\\
&\quad+\left(1-\frac{q}{\Phi(p+q+r)-\Phi(p+r)}e^{-\Phi(p+r)\ep}W^{(p+q+r)}(\ep)\right)\omega^-_{1,\ep}(0). \\
\end{split}
\end{equation*}
Then
\begin{equation*}
\begin{split}
\omega^-_1(0)
&=\lim_{\ep\goto 0+}\omega^-_{1,\ep}(0)\\
&=\frac{1}{q}(\Phi(p+q+r)-\Phi(p+r))\left(\frac{p+q+r-\psi(\theta+\Phi(r))}{\Phi(p+q+r)-\theta-\Phi(r)}-\frac{p+q+r-\psi(\theta)}{\Phi(p+q+r)-\theta}\right).
\end{split}
\end{equation*}

Similarly,
\begin{equation*}
\begin{split}
\omega^-_{2,\ep}(0)
&=\int_{-\infty}^{-\ep} \bE [e^{-p\tau^-_{-\ep}}; \tau^-_{-\ep}<e_q\wedge e_r, X_{\tau^-_{-\ep}}\in dy ]\bE_y [e^{-p\tau^+_0}; \tau^+_0<e_r]\omega^-_{2,\ep}(0)\\
&\quad+\bE[e^{-pe_r-\theta X(e_r)}; e_r<\tau^-_{-\ep}\wedge e_q] +o(W(\ep)) \\
&=\left(1-\frac{q}{\Phi(p+q+r)-\Phi(p+r)}e^{-\Phi(p+r)\ep}W^{(p+q+r)}(\ep) \right)\omega^-_{2,\ep}(0) \\
&\quad+\frac{r}{\theta+\Phi(p+q+r)}W^{(p+q+r)}(\ep)+o(W(\ep)),
\end{split}
\end{equation*}
where
\begin{equation*}
\begin{split}
&\bE[e^{-pe_r-\theta X(e_r)}; e_r<\tau^-_{-\ep}\wedge e_q]\\
&=r\int_{-\ep}^\infty e^{-\theta y}\left(e^{-\Phi(p+q+r)(y+\ep)}W^{(p+q+r)}(\ep)- W^{(p+q+r)}(-y)\right)dy\\
&=\frac{r}{\theta+\Phi(p+q+r)}W^{(p+q+r)}(\ep)+o(W(\ep)).
\end{split}
\end{equation*}
Then
\[\omega^-_2(0)=\frac{r}{q}\times\frac{\Phi(p+q+r)-\Phi(p+r)}{\theta+\Phi(p+q+r)}. \]

For $x<0$,
\begin{equation*}
\begin{split}
\omega^-_1(x)&=e^{\theta x}\bP_x\{e_r<\tau^+_0\}+\bE_x [e^{-p\tau^+_0}; \tau^+_0<e_r]\omega^-_1(0)  \\
&=e^{\theta x}(1-e^{\Phi(r)x})\\
&\quad+\frac{e^{\Phi(p+r)x}}{q}(\Phi(p+q+r)-\Phi(p+r))\left(\frac{p+q+r-\psi(\theta+\Phi(r))}{\Phi(p+q+r)-\theta-\Phi(r)}-\frac{p+q+r-\psi(\theta)}{\Phi(p+q+r)-\theta}\right). \\
\end{split}
\end{equation*}

\begin{equation*}
\begin{split}
\omega^-_2(x)&= \bE_x [e^{-p\tau^+_0}; \tau^+_0<e_r]\omega^-_2(0)  \\
&=\frac{re^{\Phi(p+r)x}}{q}\times\frac{\Phi(p+q+r)-\Phi(p+r)}{\theta+\Phi(p+q+r)}.
\end{split}
\end{equation*}

For $x>0$,
\begin{equation*}
\begin{split}
\omega^-_1(x)&=\int_{-\infty}^0\bE_x[ e^{-(p+q)\tau^-_0}; \tau^-_0<e_r, X_{\tau^-_0}\in dy ]\left(\bE_y[e^{-p\tau^+_0};\tau^+_0<e_r]
\omega^-_1(0)+e^{\theta y}\bP_y\{e_r<\tau^+_0\} \right)\\
&=\bE_x e^{-(p+q+r)\tau^-_0+\Phi(p+r) X(\tau^-_0)}\omega^-_1(0)+\bE_x e^{-(p+q+r)\tau^-_0+\theta X(\tau^-_0)}
-\bE_x e^{-(p+q+r)\tau^-_0+(\theta+\Phi(r)) X(\tau^-_0)}   \\
&=\left\{ Z^{(p+q+r)}(x,\Phi(p+r))-\frac{q}{\Phi(p+q+r)-\Phi(p+r)}W^{(p+q+r)}(x) \right\}\\
&\qquad\times\frac{1}{q}(\Phi(p+q+r)-\Phi(p+r))\left(\frac{p+q+r-\psi(\theta+\Phi(r))}{\Phi(p+q+r)-\theta-\Phi(r)}
-\frac{p+q+r-\psi(\theta)}{\Phi(p+q+r)-\theta}\right)\\
&\quad+Z^{(p+q+r)}(x,\theta)-\frac{p+q+r-\psi(\theta)}{\Phi(p+q+r)-\theta}W^{(p+q+r)}(x)\\
&\quad-Z^{(p+q+r)}(x,\theta+\Phi(r))+\frac{p+q+r-\psi(\theta+\Phi(r))}{\Phi(p+q+r)-\theta-\Phi(r)}W^{(p+q+r)}(x)\\
&= Z^{(p+q+r)}(x,\Phi(p+r))\omega^-_1(0)+Z^{(p+q+r)}(x,\theta)-Z^{(p+q+r)}(x,\theta+\Phi(r)).\\
\end{split}
\end{equation*}
Similarly,
\begin{equation*}
\begin{split}
\omega^-_2(x)&=\int_{-\infty}^0\bE_x[ e^{-(p+q)\tau^-_0}; \tau^-_0<e_r, X_{\tau^-_0}\in dy ]\bE_y[e^{-p\tau^+_0};\tau^+_0<e_r]\omega^-_2(0)
+\bE_x[e^{-(p+q)e_r-\theta X(e_r)}; e_r<\tau^-_0] \\
&=\frac{r}{q}\times\frac{\Phi(p+q+r)-\Phi(p+r)}{\theta+\Phi(p+q+r)}\left\{ Z^{(p+q+r)}(x,\Phi(p+r))-\frac{q}{\Phi(p+q+r)-\Phi(p+r)}W^{(p+q+r)}(x) \right\}\\
&\quad+\frac{r}{\theta+\Phi(p+q+r)}W^{(p+q+r)}(x)-r\int_0^x e^{-\theta y}W^{(p+q+r)}(x-y)dy\\
&=\frac{r}{q}\times\frac{\Phi(p+q+r)-\Phi(p+r)}{\theta+\Phi(p+q+r)}Z^{(p+q+r)}(x,\Phi(p+r))
-r\int_0^x e^{-\theta y}W^{(p+q+r)}(x-y)dy,
\end{split}
\end{equation*}
where
\begin{equation*}
\begin{split}
&\bE_x[e^{-(p+q)e_r-\theta X(e_r)}; e_r<\tau^-_0]\\
&=r\int_0^\infty e^{-\theta y}\left(e^{-\Phi(p+q+r)y}W^{(p+q+r)}(x)-W^{(p+q+r)}(x-y)  \right)dy  \\
&=\frac{r}{\theta+\Phi(p+q+r)}W^{(p+q+r)}(x)-r\int_0^x e^{-\theta y}W^{(p+q+r)}(x-y)dy.
\end{split}
\end{equation*}
We thus finish the proof.
\end{proof}

Using Theorem \ref{thm2} we can find the probability that $X$ hits $0$ at time $T^-(r)$. It shows that $\bP_x\{X_{T^-(r)}=0\}>0$ if and only if process $X$ has a nontrivial Brownian component, which is similar to what happens at the first exit time.

\begin{cor}\label{corA}
For any $p,q,t>0$ and $x\in\bR$, we have
\begin{equation}\label{creeping}
\begin{split}
&\bE_x [e^{-pT^-(r)-q \int_0^{T^-(r)}  1_{X_s \geq 0} ds}; T^-(r)<e_r, X_{T^-(r)}=0] \\
&= \frac{\sigma^2}{2} \Phi(r)Z^{(p+q+r)}(x,\Phi(p+r)) \frac{1}{q}(\Phi(p+q+r)-\Phi(p+r))-\frac{\sigma^2}{2} \Phi(r)W^{(p+q+r)}(x).\\
\end{split}
\end{equation}
In particular, for $\psi'(0+)<0$,
\[\bE_x[e^{-pT^-};  X_{T^-}=0]=\frac{\sigma^2\Phi(0)}{2}\left(e^{\Phi(p)x}\Phi'(p)-W^{(p)}(x)\right).\]
\end{cor}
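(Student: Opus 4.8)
The plan is to read off the creeping formula (\ref{creeping}) by sending $\theta\to\infty$ in the closed form for $\omega^-_1(x)$ already obtained in Theorem \ref{thm2}. First I would record the geometric fact that, since $X$ has no positive jumps, the value at the last exit from the positive half-line satisfies $X_{T^-(r)}\leq 0$: were $X_{T^-(r)}>0$, right-continuity would force $X_s>0$ on a right neighbourhood of $T^-(r)$, contradicting maximality; hence $X$ leaves the positive part either by creeping, in which case $X_{T^-(r)}=0$, or by a jump strictly below $0$, in which case $X_{T^-(r)}<0$. Consequently $e^{\theta X_{T^-(r)}}\leq 1$, and dominated convergence gives
\[\lim_{\theta\to\infty}\omega^-_1(x)=\bE_x[e^{-pT^-(r)-q\int_0^{T^-(r)}1_{X_s\geq 0}\dd s};T^-(r)<e_r,X_{T^-(r)}=0],\]
which is precisely the left-hand side of (\ref{creeping}).

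It then remains to pass to the limit in the formula itself. Writing $P=p+q+r$ and $f(v)=\frac{P-\psi(v)}{\Phi(P)-v}$, Theorem \ref{thm2} reads
\[\omega^-_1(x)=\tfrac1q(\Phi(P)-\Phi(p+r))Z^{(P)}(x,\Phi(p+r))\bigl(f(\theta+\Phi(r))-f(\theta)\bigr)+Z^{(P)}(x,\theta)-Z^{(P)}(x,\theta+\Phi(r)).\]
The two analytic inputs I would establish are: (i) $f(\theta+\Phi(r))-f(\theta)\to\frac{\sigma^2}{2}\Phi(r)$; and (ii) $Z^{(P)}(x,\theta)-Z^{(P)}(x,\theta+\Phi(r))\to-\frac{\sigma^2}{2}\Phi(r)W^{(P)}(x)$. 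For (i) I would use the L\'evy--Khintchine form of $\psi$ to get $\psi(v)/v^2\to\frac{\sigma^2}{2}$ and $\psi(v+h)-\psi(v)=\sigma^2 hv+o(v)$ as $v\to\infty$ (the jump integral contributing $o(v^2)$ and $o(v)$ respectively, by dominated convergence against the $\Pi$-integrable bound $x^2$ near the origin), and then expand the difference quotient directly. For (ii) I would use identity (\ref{idenA}) to write $Z^{(P)}(x,v)=\bE_x[e^{-P\tau^-_0+vX_{\tau^-_0}};\tau^-_0<\infty]+f(v)W^{(P)}(x)$; since $X_{\tau^-_0}\leq 0$, dominated convergence shows the first term converges to a $v$-free limit, so its increment over $[\theta,\theta+\Phi(r)]$ vanishes, leaving $\bigl(f(\theta)-f(\theta+\Phi(r))\bigr)W^{(P)}(x)\to-\frac{\sigma^2}{2}\Phi(r)W^{(P)}(x)$ by (i). Substituting (i) and (ii) yields (\ref{creeping}).

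For the special case I would take $q\to 0+$ and then $r\to 0+$ in (\ref{creeping}). As $q\to 0+$, $\frac1q(\Phi(P)-\Phi(p+r))\to\Phi'(p+r)$ and $Z^{(P)},W^{(P)}$ converge to $Z^{(p+r)},W^{(p+r)}$; using $Z^{(p+r)}(x,\Phi(p+r))=e^{\Phi(p+r)x}$ (as $\psi(\Phi(p+r))=p+r$) the right-hand side collapses to $\frac{\sigma^2}{2}\Phi(r)\bigl(e^{\Phi(p+r)x}\Phi'(p+r)-W^{(p+r)}(x)\bigr)$, and letting $r\to 0+$ (so $e_r\to\infty$, $T^-(r)\to T^-$, and $\Phi(r)\to\Phi(0)$, which is strictly positive exactly because $\psi'(0+)<0$) gives $\frac{\sigma^2\Phi(0)}{2}(e^{\Phi(p)x}\Phi'(p)-W^{(p)}(x))$.

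The main obstacle is steps (i)--(ii): the quantities $f(\theta)$ and $Z^{(P)}(x,\theta)$ each diverge like $\frac{\sigma^2}{2}\theta$, and one must show these divergences cancel to leave the finite, $\sigma^2$-proportional limit. The delicate point is controlling the jump part of $\psi$ at infinity, which is where the fact that only the Gaussian coefficient survives -- so that creeping at $T^-(r)$ has positive probability if and only if $\sigma\neq 0$ -- actually enters the argument.
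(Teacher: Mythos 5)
Your proposal is correct and follows essentially the same route as the paper: send $\theta\to\infty$ in the formula (\ref{down_a}) of Theorem \ref{thm2}, identify the limit of the left-hand side as the creeping quantity via $X_{T^-(r)}\leq 0$ and dominated convergence, compute $\lim_{\theta\to\infty}\bigl(\frac{p+q+r-\psi(\theta+\Phi(r))}{\Phi(p+q+r)-\theta-\Phi(r)}-\frac{p+q+r-\psi(\theta)}{\Phi(p+q+r)-\theta}\bigr)=\frac{\sigma^2}{2}\Phi(r)$ from the asymptotics of $\psi$, and handle $Z^{(p+q+r)}(x,\theta)-Z^{(p+q+r)}(x,\theta+\Phi(r))$ by the same decomposition through identity (\ref{idenA}). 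The only differences are cosmetic: you spell out the L\'evy--Khintchine asymptotics that the paper delegates to \cite{KKR13}, and you write out the $q,r\to 0+$ limits for the special case, which the paper states without detail.
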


\begin{proof}
Notice that
\begin{equation}
\begin{split}
&\lim_{\theta\goto\infty}\left(\frac{p+q+r-\psi(\theta+\Phi(r))}{\Phi(p+q+r)-\theta-\Phi(r)}-\frac{p+q+r-\psi(\theta)}{\Phi(p+q+r)-\theta}\right)\\
&=\lim_{\theta\goto\infty}\frac{-\psi(\theta+\Phi(r))(\Phi(p+q+r)-\theta)+\psi(\theta)(\Phi(p+q+r)-\theta)+\Phi(r)(p+q+r-\psi(\theta))}{(\Phi(p+q+r)-\theta-\Phi(r))(\Phi(p+q+r)-\theta)}\\
&=-\frac{\sigma^2}{2}\Phi(p+q+r)+\sigma^2 \Phi(r)+\frac{\sigma^2}{2}(\Phi(p+q+r)-\Phi(r))\\
&=\frac{\sigma^2}{2} \Phi(r),
\end{split}
\end{equation}
see Section 2.4 of \cite{KKR13}.

In addition,
\begin{equation}
\begin{split}
&\lim_{\theta\goto\infty}\left(Z^{(p+q+r)}(x,\theta)-Z^{(p+q+r)}(x,\theta+\Phi(r))\right)\\
&=\lim_{\theta\goto\infty}\left(\bE_x e^{-(p+q+r)\tau^-_0+\theta X(\tau^-_0)}1_{\tau^-_0<\infty}+\frac{p+q+r-\psi(\theta)}{\Phi(p+q+r)-\theta}W^{(p+q+r)}(x)\right.\\
&\qquad\qquad\left.- \bE_x e^{-(p+q+r)\tau^-_0+(\theta+\Phi(r)) X(\tau^-_0)}1_{\tau^-_0<\infty}
-\frac{p+q+r-\psi(\theta+\Phi(r))}{\Phi(p+q+r)-\theta-\Phi(r)}W^{(p+q+r)}(x) \right)\\
&=-\frac{\sigma^2}{2} \Phi(r)W^{(p+q+r)}(x).
\end{split}
\end{equation}
Then (\ref{creeping}) follows from (\ref{down_a}).
\end{proof}

\begin{cor}
For $\psi'(0+)<0$ and $x\in\bR$,
\begin{equation}
\begin{split}
&\bE_x e^{-pT^-+\theta X_{T^-}-q \int_0^{T^-}  1_{X_s \geq 0} ds}\\
&=Z^{(p+q)}(x,\Phi(p)) \frac{1}{q}(\Phi(p+q)-\Phi(p))\left(\frac{p+q-\psi(\theta+\Phi(0))}{\Phi(p+q)-\theta-\Phi(0)}-\frac{p+q-\psi(\theta)}{\Phi(p+q)-\theta}\right)\\
&\quad+Z^{(p+q)}(x,\theta)-Z^{(p+q)}(x,\theta+\Phi(0)).
\end{split}
\end{equation}
\end{cor}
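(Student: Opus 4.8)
The plan is to obtain this identity as the limit $r\goto 0+$ of the formula for $\omega^-_1(x)$ established in Theorem \ref{thm2}, so that no fresh fluctuation computation is required. The hypothesis $\psi'(0+)<0$ is exactly what makes this limit meaningful: it forces $X_t\goto-\infty$ almost surely, so the last time $T^-=T^-(0)=\sup\{t>0:X_t>0\}$ is finite almost surely, and the root $\Phi(0)$ of $\psi$ is strictly positive. Since $e_r\goto\infty$ almost surely as $r\goto 0+$, on the increasing family of events $\{e_r>T^-\}$ (whose probabilities tend to $1$) one has $T^-(r)=T^-<e_r$, so the integrand defining $\omega^-_1(x)$ coincides exactly with $e^{-pT^-+\theta X_{T^-}-q\int_0^{T^-}1_{X_s\geq 0}\dd s}$. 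I would use this coincidence to identify
\[\lim_{r\goto 0+}\omega^-_1(x)=\bE_x\, e^{-pT^-+\theta X_{T^-}-q\int_0^{T^-}1_{X_s\geq 0}\dd s}.\]

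On the analytic side I would let $r\goto 0+$ in the explicit right-hand side of (\ref{down_a}). Every ingredient is continuous in $r$: $\Phi(p+q+r)\goto\Phi(p+q)$, $\Phi(p+r)\goto\Phi(p)$ and $\Phi(r)\goto\Phi(0)$, while $Z^{(p+q+r)}(x,\cdot)\goto Z^{(p+q)}(x,\cdot)$ by continuity of $W^{(\cdot)}$ and of $\psi$ in their parameters. Substituting $r=0$ into (\ref{down_a}) then reproduces the claimed expression termwise, with $\Phi(r)$ replaced by $\Phi(0)$ both in the bracketed quotient and in the argument $\theta+\Phi(r)\goto\theta+\Phi(0)$. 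One should only check that the denominators $\Phi(p+q)-\theta-\Phi(0)$ and $\Phi(p+q)-\theta$ do not vanish; for the excluded values of $\theta$ the identity is recovered by analytic continuation in $\theta$, exactly the device already used after (\ref{omega2a}).

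The one genuinely delicate step is the interchange of limit and expectation used to justify the displayed convergence of $\omega^-_1(x)$, since the factor $e^{\theta X_{T^-(r)}}$ is unbounded. By the coincidence above, $Y_r:=e^{-pT^-(r)+\theta X_{T^-(r)}-q\int_0^{T^-(r)}1_{X_s\geq 0}\dd s}1_{T^-(r)<e_r}\goto Y:=e^{-pT^-+\theta X_{T^-}-q\int_0^{T^-}1_{X_s\geq 0}\dd s}$ almost surely, and Fatou already gives $\bE_x Y\leq\liminf_{r\goto 0+}\omega^-_1(x)$, which is finite by Theorem \ref{thm2}. The reverse inequality amounts to showing the leakage $\bE_x[Y_r; e_r\leq T^-]\goto 0$, i.e. uniform integrability of the family $\{Y_r\}$. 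I would control it with the classical fact that the running supremum $\sup_{0\leq s\leq e_r}X_s$ is exponentially distributed with rate $\Phi(r)$, so that $\bE_x e^{\theta\sup_{0\leq s\leq e_r}X_s}<\infty$ whenever $\theta<\Phi(r)$; together with the discount factor $e^{-pT^-(r)}$ this bounds $\{Y_r\}$ uniformly and kills the leakage. This identification of the limit with the target expectation is where I expect the main work to lie, the two convergences of the previous paragraphs being routine.
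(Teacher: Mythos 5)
Your strategy---passing to the limit $r\goto 0+$ in Theorem \ref{thm2}---is precisely the paper's intended derivation (the corollary is stated there without proof, immediately after Corollary \ref{corA}, in the same limiting spirit as Remark \ref{remA}), and your treatment of the analytic side of (\ref{down_a}) is fine: $\Phi(p+q+r)\goto\Phi(p+q)$, $\Phi(p+r)\goto\Phi(p)$, $\Phi(r)\goto\Phi(0)$, the scale functions are continuous in their parameter, and the finitely many $\theta$ where a denominator vanishes are recovered by continuity or analytic continuation. The problem lies in the step you yourself single out as the main work. The domination you propose fails for $\theta\ge\Phi(0)$: under $\bP_x$ the quantity $\sup_{0\le s\le e_r}X_s-x$ is exponential with rate $\Phi(r)$, and $\Phi(r)\downarrow\Phi(0)$ as $r\goto 0+$, so $\bE_x e^{\theta\sup_{0\le s\le e_r}X_s}=e^{\theta x}\Phi(r)/(\Phi(r)-\theta)$ is finite only for $\theta<\Phi(r)$ and stays bounded along $r\goto 0+$ only for $\theta<\Phi(0)$. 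Since $\Phi(0)<\infty$ and the corollary is asserted for all $\theta>0$, your uniform-integrability argument covers only part of the parameter range; it could be rescued by proving the identity for $\theta<\Phi(0)$ and then continuing analytically in $\theta$, but you invoke continuation only for the vanishing denominators, not for this.

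The gap is easy to close, because the delicate step is in fact a phantom: the integrand is bounded by $1$. On $\{T^-(r)<e_r\}$ one has $X_t\le 0$ for every $t\in(T^-(r),e_r]$ by the definition of the last exit time, hence $X_{T^-(r)}\le 0$ by right continuity of the paths (and if the defining set is empty, then $T^-(r)=0$ forces $X_0=x\le 0$). Consequently, for $\theta>0$,
\begin{equation*}
Y_r=e^{-pT^-(r)+\theta X_{T^-(r)}-q\int_0^{T^-(r)}1_{X_s\ge 0}\,\dd s}\,1_{\{T^-(r)<e_r\}}\le 1,
\end{equation*}
and likewise $Y\le 1$ since $X_{T^-}\le 0$ on $\{T^-<\infty\}$, which has full $\bP_x$-probability when $\psi'(0+)<0$. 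With the coupling $e_r=e_1/r$ you already use, $Y_r\goto Y$ almost surely, so dominated convergence with the constant dominating function $1$ gives $\omega^-_1(x)=\bE_x Y_r\goto\bE_x Y$ at once; equivalently, the leakage term is at most $\bP_x\{e_r\le T^-\}\goto 0$. With this substitution in place of your supremum bound, your proof is complete and coincides with the paper's derivation.
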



\begin{rem}
For $x<0$,
\begin{equation*}
\begin{split}
\bE_x e^{-pT^-(r)}&=1-e^{\Phi(r)x}+e^{\Phi(p+r)x}\frac{\Phi'(p+r)}{\Phi(p+r)}\times\frac{p\Phi(r)}{\Phi(p+r)-\Phi(r)}.
\end{split}
\end{equation*}

For $x>0$,
\begin{equation*}
\begin{split}
\bE_x e^{-pT^-(r)}&=Z^{(p+r)}(x,\Phi(p+r))\frac{\Phi'(p+r)}{\Phi(p+r)}\times\frac{(p+r)\Phi(r)-r\Phi(p+r)}{\Phi(p+r)-\Phi(r)}+Z^{(p+r)}(x)-Z^{(p+r)}(x,\Phi(r))\\
&\quad+\frac{r\Phi'(p+r)}{\Phi(p+r)}Z^{(p+r)}(x,\Phi(p+r))
-r\int_0^x W^{(p+r)}(x-y)dy\\
&=e^{\Phi(p+r)x}\frac{\Phi'(p+r)}{\Phi(p+r)}\times\frac{p\Phi(r)}{\Phi(p+r)-\Phi(r)}+\frac{r}{p+r}+\frac{p}{p+r}Z^{(p+r)}(x)-Z^{(p+r)}(x,\Phi(r)).\\
\end{split}
\end{equation*}
We thus recover Theorem 2 of \cite{Bau09}.
\end{rem}

\begin{rem}\label{remA}
For $\psi'(0+)<0$ and $x\in\bR$, letting $q, \theta, r\goto 0+$ we have
\[\bE_x e^{-pT^-}=Z^{(p)}(x,\Phi(p))\frac{p\Phi(0)\Phi'(p)}{\Phi(p)(\Phi(p)-\Phi(0))}+Z^{(p)}(x)-Z^{(p)}(x,\Phi(0));\]
letting $p, q, r\goto 0+$, we have
\[\bE_x e^{\theta X_{T^-}}=e^{\Phi(0)x}\Phi'(0)\left(\frac{\psi(\theta+\Phi(0))}{\theta}+\frac{\psi(\theta)}{\Phi(0)-\theta}\right)
+Z(x,\theta)-Z(x,\theta+\Phi(0));  \]
letting $p, \theta, r\goto 0+ $, we have
\begin{equation*}
\begin{split}
\bE_x e^{-q \int_0^\infty 1_{X_s >0} ds}&=Z^{(q)}(x,\Phi(0))\frac{\Phi(0)}{\Phi(q)}+Z^{(q)}(x,0)-Z^{(q)}(x,\Phi(0)) \\
&=Z^{(q)}(x,\Phi(0))\left(\frac{\Phi(0)}{\Phi(q)}-1\right)+Z^{(q)}(x).
\end{split}
\end{equation*}
which recovers Corollary 3 (iv) of \cite{LRZ14}.
\end{rem}

We now consider some examples for more explicit results.
Let $X_t=-\mu t+B_t$ with $\mu>0$ and Brownain motion $B$.  Then $T^-<\infty$ a.s.,
\[\psi(\la)=-\mu\la+\frac{\la^2}{2}, \,\, \Phi(q)=\sqrt{\mu^2+2q}+\mu,\]
\[W^{(q)}(x)=\frac{1}{\sqrt{\mu^2+2q}}\left( e^{(\sqrt{\mu^2+2q}+\mu)x}-e^{-(\sqrt{\mu^2+2q}-\mu)x}\right)1_{x\geq 0}\]
 and
 \[W(x)=\frac{1}{\mu}(e^{2\mu x}-1)1_{x\geq 0}.\]
By Corollary \ref{corA},
\begin{equation}
\begin{split}
&\bE_x[e^{-pT^-};  X_{T^-}=0]\\
&=\frac{\Phi(0)}{2}\left(e^{\Phi(p)x}\Phi'(p)-W^{(p)}(x)\right)\\
&=\mu\left(\frac{1}{\sqrt{\mu^2+2p}} e^{(\sqrt{\mu^2+2p}+\mu)x}-\frac{1}{\sqrt{\mu^2+2p}}\left( e^{(\sqrt{\mu^2+2p}+\mu)x}-e^{-(\sqrt{\mu^2+2p}-\mu)x}\right)1_{x\geq 0}  \right)\\
&=\frac{\mu}{\sqrt{\mu^2+2p}}\left( e^{(\sqrt{\mu^2+2p}+\mu)x}1_{x<0}+e^{-(\sqrt{\mu^2+2p}-\mu)x}1_{x\geq 0} \right).\\
\end{split}
\end{equation}
It follows that
\[\bP_x\{  X_{T^-}=0\}= e^{2\mu x}1_{x<0}+1_{x\geq 0}=\bP_x\{\tau^+_0<\infty\} 1_{x<0}+1_{x\geq 0}, \]
which is expected intuitively.

Let $X$ be a spectrally negative $\alpha$-stable process with a negative drift whose Laplace exponent is given by $\psi(\lambda)=-\mu\la+\la^\al$ for $\mu>0$ and $1<\al<2$. Then $\Phi(0)=\mu^{\frac{1}{\al-1}}$ and by Remark \ref{remA}.
\begin{equation}
\begin{split}
\bE e^{\theta X_{T^-}}
&=\frac{1}{\psi'(\Phi(0))}\left(\frac{\psi(\theta+\Phi(0))}{\theta}+\frac{\psi(\theta)}{\Phi(0)-\theta}\right)\\
&=\frac{1}{(\al-1)\mu}\left(\frac{-\mu\theta-\mu^{\frac{\al}{\al-1}}+(\theta+\mu^{\frac{1}{\al-1}})^\al}{\theta}
+\frac{-\mu\theta+\theta^\al}{\mu^{\frac{1}{\al-1}}-\theta}  \right).\\
\end{split}
\end{equation}
It is not hard to see that
\[\lim_{\theta\goto\infty}\bE e^{\theta X_{T^-}}=0, \]
i.e. $\bP\{X_{T^-}=0\}=0$ which agrees with Corollary \ref{corA}.

 Let $X$ be a compound Poisson process with a positive drift $\mu$ and  negative exponential jumps with common rate $\rho$ arriving according to a Poisson process with rate $a$. We further assume that  $\mu-\frac{a}{\rho}<0$ so that the last exit time is finite. Then
\[\psi(\lambda)=\mu\lambda-\frac{a\la}{\rho+\la}, \,\,\,\,\Phi(0)=\frac{a}{\mu}-\rho,\]
\[ \psi'(\Phi(0))=\mu-\frac{a\rho}{\left(\rho+\frac{a}{\mu}-\rho\right)^2}=\mu-\frac{\rho\mu^2}{a}\]
and
\begin{equation}
\begin{split}
\psi(\theta+\Phi(0))&=\mu(\theta+\frac{a}{\mu}-\rho)-\frac{a(\theta+\frac{a}{\mu}-\rho)}{\rho+\theta+\frac{a}{\mu}-\rho} \\
&=(\theta+\frac{a}{\mu}-\rho)\frac{\mu\theta}{\theta+\frac{a}{\mu}}.
\end{split}
\end{equation}
It follows that
\begin{equation}
\begin{split}
\bE e^{\theta X_{T^-}}
&=\frac{1}{\psi'(\Phi(0))}\left(\frac{\psi(\theta+\Phi(0))}{\theta}+\frac{\psi(\theta)}{\Phi(0)-\theta}\right)\\
&=\frac{1}{\mu-\frac{\rho\mu^2}{a}}\left((\theta+\frac{a}{\mu}-\rho)\frac{\mu}{\theta+\frac{a}{\mu}}+\frac{\mu\theta-
\frac{a\theta}{\rho+\theta}}{\frac{a}{\mu}-\rho-\theta} \right)\\
&=\frac{a\rho}{(\rho+\theta)(\mu\theta+a)}\\
&=\frac{a}{a-\mu\rho}\times\frac{\rho}{\rho+\theta}-\frac{\mu\rho}{a-\mu\rho}\times\frac{a}{a+\mu\theta}.\\
\end{split}
\end{equation}
Therefore,  $X_{T^-}$ has a density
\[\left(\frac{a}{a-\mu\rho}\rho e^{\rho x}-\frac{\mu\rho}{a-\mu\rho}\times\frac{a}{\mu}e^{\frac{ax}{\mu}}\right)1_{x<0}=
\frac{a\rho}{a-\mu\rho}\left(e^{\rho x}-e^{\frac{ax}{\mu}}\right)1_{x<0}.\]
 It is remarkable that $X_{T^-}$ does not follow an exponential distribution. In addition, compared with the exponential random variable $e_\rho$, $-X_{T^-}$ has a better chance to take large values. Notice that  $X_{T^-}$ does not have the same density of $e_\rho$, the Laplace transform of $X_{T^-}$ can not be directly obtained from the distribution of $X_{T^--}$ due to the the fact that $T^-$ is not a stopping time.



\end{document}